\newcommand{\color}[1]{}
\newtheorem{thm}{Theorem}[section]
\newtheorem{lem}{Lemma}[section]
\newtheorem{cor}{Corollary}[section]
\newtheorem*{question}{Questions}
\newtheorem{Def}{Definition}[section]
\title{On the variety of linear recurrences and numerical semigroups}
\author{Ivan Martino\footnote{Stockholm University, Department of Mathematics, email: martino@math.su.se},
Luca Martino\footnote{Universidad Carlos III de Madrid, Department of Signal theory and Communications, email: luca@tsc.uc3m.es}}
\begin{document}
\maketitle
\begin{abstract}
In this work, we prove the existence of linear recurrences of order $M$ with a non-trivial solution vanishing exactly on the set of gaps (or a subset) of a numerical semigroup $S$ finitely generated by $a_1<a_2<\dots <a_N$ and $M=a_N$.

Keywords: numerical semigroups, linear recurrences, generating function.
\end{abstract}

%%%%%%%%%%%%%%%%%%%%%%%%
\section{Introduction and problem statement}
%%%%%%%%%%%%%%%%%%%%%%%%

In this work, we study certain issues posed by R. Fr\"oberg and B. Shapiro in \cite{Ralf-Boris}.
Inspired by the Skolem-Mahler-Lech Theorem \cite{Lech}, they have defined the variety $V_{(M;I)}$, the set of all $M$-order linear recurrence equations with a non-trivial solution vanishing at least at all the points of a given non-empty finite set $I\subset \mathbb{N}$.
They have related the study of a particular open subvariety of $V_{(M;I)}$ to some ideals generated by Schur functions \cite{shur}.
They also stated certain open problems.
For instance, one open issue is to understand for which pairs $(M;I)$ the variety $V_{(M;I)}$ is empty or not. 

Here, we prove that the variety $V_{(M;I)}$ is non-empty when $I$ is a subset of the gaps of a numerical semigroup $S$ finitely generated by $a_1<a_2<\dots <a_N$ and $M=a_N$. We provide the analytic form of these suitable recurrence equations, jointly with the proper initial conditions. The solutions become zero only at the gaps of $S$. In the sequel, we recall briefly some useful background material and introduce more specifically our goal.

%%%%%%%%%%%%%%%%%%%%%%%%%%%%%%%
\subsection{Numerical semigroups}\label{sec-semigroup}
%%%%%%%%%%%%%%%%%%%%%%%%%%%%%%%
Numerical semigroups have been studied since the 19th century and they appear naturally in combinatorics and commutative algebra. In this work, we consider a numerical semigroup $S$ embedded in $(\mathbb{N}\cup\{0\},+)$.
%A semigroup $(M, \circ)$ is a set $M$ equipped with an associative operation $\circ$.
%
%When it has also a neutral element it is called monoid. 
%
%An affine semigroup is a submonoid of $(\mathbb{N}^r, +)$ such that $\mathbf{0}\in S$. 
%
%If $r=1$, the semigroup is called numerical semigroup. 
%
Given $N$ integers  $a_1,\, a_2,\,\dots,\, a_N\in \mathbb{N}$, a (finitely generated) numerical semigroup $S$ \cite{Brauer}, is defined as 
\begin{equation*}
	S=\left\langle a_1,\, a_2,\,\dots,\, a_N\right\rangle=\left\{\sum_{i=1}^{N}n_ia_i: n_i\in \mathbb{N}\cup\{0\}\right\}.
\end{equation*}
If a natural number does not belong to $S$ it is called a \emph{gap} of $S$. We denote as $\Delta(a_1,\dots, a_N):=\mathbb{N} \setminus S$,
the set of the gaps of $S$. We have $\left|\Delta(a_1,\dots, a_N)\right| < \infty$ if and only if $\operatorname{gcd}(a_1, a_2, \dots, a_N)=1$ (in literature, it is often required as necessary condition). 
However, one can always reduce to this case. We \textbf{do not} require that $a_1, a_2,..., a_N$ are a minimal set of generators for $S$ and we always assume that $a_1< a_2<...< a_N$ and $\operatorname{gcd}(a_1,..., a_N)=1$. 

%
%{\color{red}In this work}, we assume that this condition holds for {\color{blue}every considered semigroup}. %\cite{Ralf}.
%
The gaps of a numerical semigroup $S$ are well studied \cite{Rosales} and strongly connected, for instance, with Frobenius number's problem %\cite{Sylvester, Fel-3-dim} 
and Hilbert function's problem \cite{Ramirez}. 
The maximal element of $\Delta(a_1,\dots, a_N)$ (with respect to the canonical order of $\mathbb{N}$) is called the {\em Frobenius number}.

% {\color{blue} We denote as 
% \begin{equation}\label{Eq1F}
% h(a_1,...,a_N)=\max \Delta(a_1,\dots, a_N),
% \end{equation}
% the so-called {\em Frobenius number} related to the semigroup $S$.
% } 
%

%NASCONDO LA DEFINIZIONE DI WEIGHT PERCH? LA USIAMO NELL'ALTRO ARTICOLO.
%If $S=\left\langle a_1,\, a_2,\,\dots,\, a_N\right\rangle$, then there could exist more that one way to write $s=\sum_{i=1}^N c_i a_i$.
%
%
%The \emph{weight} of $s$, $w(S)$ is defined as
%\begin{equation*}%\label{eq:defi_weight}
%	w(s)=\operatorname{max}\left\{\sum_{i=1}^N c_i: s=\sum_{i=1}^N c_i a_i\right\}.
%\end{equation*}
%In Section \ref{sec-random-walk}, we will define a random walk associated to the semigroup $S$.

%=============================================================================
%=============================================================================
%%%%%%%%%%%%%%%%%%%%%%%%%%%%%%%%%%%%%%%
\subsection{The variety of linear recurrences}\label{sec-recurrence}
%%%%%%%%%%%%%%%%%%%%%%%%%%%%%%%%%%%%%%%
We now present the open questions, stated in \cite{Ralf-Boris}, that we deal with in the sequel.
First of all, {\color{red}we associate to every $M$-tuple of complex numbers $\alpha=(\alpha_1,\dots, \alpha_M)$ the following linear recurrence equation $\mathcal{U}(\alpha)$:
\begin{equation}\label{dep_u_alpha}
  \mathcal{U}(\alpha): g_k+\alpha_1 g_{k-1}+\dots +\alpha_i g_{k-i}+\dots+ \alpha_Mg_{k-M}=0.
\end{equation}
}
If $\alpha_M\neq 0$, then $\mathcal{U}(\alpha)$ is of order $M$. 
%
% IL SEGUENTE PEZZO POTREBBE ESSERE RIMOSSO OVE NECESSARIO
%
The solutions of a linear homogeneous recurrence equation with constant coefficients are well-known \cite{Batchelder67} and {\color{red}they} depend on the roots of the \emph{characteristic polynomial} of $\mathcal{U}(\alpha)$,
\begin{equation*}
  p_{\alpha}(y): y^{M}+\alpha_1 y^{M-1}+\dots +\alpha_i y^i+\dots +\alpha_M=0.
\end{equation*}
Let $\{\rho_1,\dots, \rho_M\}$ be the set of the roots of $p_{\alpha}(y)$ (called \emph{characteristic roots} or \emph{poles}).~If~the roots $\rho_i$ are real and distinct, i.e., $\rho_i\in \mathbb{R}$, $\rho_i\neq \rho_j$ with $i\neq j$ and $i,j\in\{1,...,M\}$, a generic solution of the recurrence equation has the following analytic form 
\begin{equation} \label{IVAN_lasciaStoCazzoDiNumeroMiSERVE}
  g_k=c_1 \rho_1^k +c_2 \rho_2^k +\dots + c_M \rho_M^k,
\end{equation}
where the coefficients $c_i$ depend on the initial conditions associated to the recurrence equation \eqref{dep_u_alpha}. With multiple roots and complex roots, other functional forms appear in the solutions like cosine and sine functions \cite{Batchelder67}.

%
%More in general, it is possible to solve the linear recurrence $\mathcal{U}(\alpha)$ in $\mathbb{Z}$ or $\mathbb{C}$ {\color{green} \cite{}(LUCA, mi hai anche detto che nomalmente si risolvono su Z o C, magari lo puoi mettere)}.

\begin{Def}
  Let $M\in \mathbb{N}$ and let $I$ be a {\color{red}non empty} finite subset of $\mathbb{N}$.
  The \emph{open linear recurrence variety} associated to the pair $(M;I)$, $V_{(M;I)}$, is the set of all linear recurrences of order exactly $M$ having a non-trivial solution vanishing at least in all the points of $I$. 
\end{Def}

Using the bijection $\mathcal{U}(-)$, we can always think the set of all linear recurrences (of order at most $M$) as the affine space $\mathbb{A}^{M}_{\mathbb{C}}$. 
Being of order exactly $M$ means that they belong to the affine principal open set $\mathbb{A}^{M}_{\mathbb{C}}\setminus \{\alpha_M\neq 0\}$.

%In \cite{Ralf-Boris}, it is stated the following questions:

In \cite{Ralf-Boris}, Fr\"oberg and Shapiro prove that $V_{(M;I)}$ is a algebraic variety and they ask the following questions:
\begin{question}
{\bf (a)} For which pairs $(M;I)$ the variety $V_{(M;I)}$ is empty/not empty? and {\bf (b)}, if $V_{(M;I)}\neq \emptyset$, is there a recurrence vanishing in a finite number of points?
\end{question}
In the rest of this work, we show that to each numerical semigroup $S =\left\langle a_1,\, a_2,\,\dots,\, a_N\right\rangle$ it is possible to associate a recurrence $\mathcal{U}_S$ of order $a_N$ vanishing \textit{exactly} on its {\color{red}\textit{finite number} of} gaps $\Delta(a_1,\dots, a_N)$, so that $V_{(a_N;\Delta(a_1,\dots, a_N))}\neq \emptyset$.
%

%%%%%%%%%%%%%%%%%%%%%%%%%%%%%%%%%%%%%%%%%%%%%%%%
\section{The recurrence associated to a numerical semigroup}\label{sec-random-walk}
%%%%%%%%%%%%%%%%%%%%%%%%%%%%%%%%%%%%%%%%%%%%%%%%%%
In this section, we first provide a novel characteristic function related to the semigroup $S$. 
Then, we construct the recurrence $\mathcal{U}_S$ associated to the semigroup $S =\left\langle a_1,\, a_2,\,\dots,\, a_N\right\rangle$.
 %{\color{blue} In this section we construct the recurrence $\mathcal{U}_S$ associated to the semigroup $S$. (mi piace questa frase) }
Let $w_1, w_2, \dots, w_N$ be strictly positive real numbers.
Let us define the polynomial $F_1(z)=\sum_{i=1}^{N}w_i z^{a_i}$. 
%
% {\color{blue} c'e' bisogno di definire $F_t$ qui sotto? forse adesso ha perso il significato probabilistico.... se si che significato ha adesso? dovremmo spiegarlo solo con poche parole....ma é la sezione fundamentale del paper... }
% We also set $F_t(z)=\left[F_1(z)\right]^{t}$ and $ G(z)=\sum_{t=0}^{\infty} F_t(z)$.
% \begin{equation*}
%   G(z)= \sum_{k=0}^{\infty} g_k z^k=\sum_{t=1}^{\infty} F_t(z).
% \end{equation*}
%Since $F_t(z)=\left[F_1(z)\right]^{t}$, we have
% \begin{equation*}
%     G(z)=\sum_{t=0}^{\infty} F_t(z).
% \end{equation*}
We also set %$G(z)=\frac{1}{1-F_1(z)}$ and so
%hence finally $G(z)$ can be expressed as   
\begin{equation}\label{TransfFunctEq}
  G(z)=\frac{1}{1-F_1(z)}= \frac{1}{1-w_1z^{a_1}-...-w_Nz^{a_N}}.
\end{equation}
We denote by $g_k$ the coefficient of $z^k$ in the power series expansion of $G(z)$, i.e., $G(z)=\sum_{k=0}^{+\infty} g_k z^k$.

\begin{lem}\label{lem-utile}
    The sequence of coefficients, $\{g_k\}_{k\in \mathbb{N}\cup\{0\}}$, is {\color{blue} the} solution of the recurrence
    \begin{equation}\label{EqRecurrence}
      \mathcal{U}_S: \,\,\, g_k=w_1g_{k-a_1} + ... +w_Ng_{k-a_N}, \quad  \forall \mbox{  } k> 0,
    \end{equation}
     with initial condition $g_0=1$ and  
     {\color{magenta}  $g_{j}=0$, for $-a_N<j<0$}.
     %{(secondo Luca)\color{magenta}  $g_{j-q}=0$, for $j=1,...,q-1$ and $q\in\{a_1,...,a_N\}$}.
\end{lem}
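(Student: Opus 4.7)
The plan is to exploit the functional equation $G(z)\bigl(1-F_1(z)\bigr)=1$ that defines $G(z)$ and read off the recurrence by matching coefficients of equal powers of $z$.

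First I would rewrite the definition of $G(z)$ as
\begin{equation*}
  G(z) = 1 + F_1(z)\,G(z),
\end{equation*}
which is a formal power series identity over $\mathbb{C}[[z]]$ (or over $\mathbb{R}[[z]]$, since the $w_i$ are real). This is the only analytic ingredient needed: everything afterwards is pure coefficient bookkeeping, which is why the lemma is fundamentally a computation rather than a theorem.

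Next I would substitute $F_1(z)=\sum_{i=1}^{N} w_i z^{a_i}$ and $G(z)=\sum_{k\ge 0} g_k z^k$ into this identity. Expanding the Cauchy product $F_1(z)G(z)$ gives
\begin{equation*}
  F_1(z) G(z) = \sum_{i=1}^{N} w_i \sum_{k\ge 0} g_k z^{k+a_i} = \sum_{k\ge a_1}\Bigl(\sum_{i\,:\,a_i\le k} w_i g_{k-a_i}\Bigr) z^k.
\end{equation*}
Adopting the convention $g_j = 0$ for $-a_N < j < 0$ (which is precisely the initial condition prescribed by the statement), the inner sum extends to all $i=1,\dots,N$, and one obtains $F_1(z) G(z) = \sum_{k\ge 1}\bigl(\sum_{i=1}^N w_i g_{k-a_i}\bigr) z^k$ since the $k=0$ term vanishes (no $a_i$ equals $0$ as $a_1\ge 1$).

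Finally I would equate coefficients of $z^k$ in $G(z) = 1 + F_1(z)G(z)$. The constant term gives $g_0 = 1$, matching the required initial condition. For each $k\ge 1$, matching the coefficient of $z^k$ yields exactly
\begin{equation*}
  g_k = \sum_{i=1}^{N} w_i\, g_{k-a_i},
\end{equation*}
which is the desired recurrence $\mathcal{U}_S$. I do not expect any serious obstacle: the only subtlety is the careful handling of the negative-index terms, and the prescribed initial condition $g_j=0$ for $-a_N<j<0$ is precisely the bookkeeping convention that makes the coefficient-matching argument go through uniformly for every $k\ge 1$.
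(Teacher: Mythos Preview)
Your proposal is correct and follows essentially the same approach as the paper: both start from the functional identity (you write it as $G(z)=1+F_1(z)G(z)$, the paper as $G(z)(1-F_1(z))=1$), substitute the power series, shift indices, invoke the convention $g_j=0$ for $-a_N<j<0$, and match coefficients to read off $g_0=1$ and the recurrence. The only difference is cosmetic packaging.
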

\begin{proof}
  We can rewrite Eq. (\ref{TransfFunctEq}) as $G(z)(1-w_1z^{a_1}-...-w_Nz^{a_N})=1$. Since $G(z)=\sum_{k=0}^{+\infty} g_k z^k$, replacing above, we obtain easily $\sum_{k=0}^{+\infty} g_{k} z^{k}-w_1\sum_{k=0}^{+\infty} g_{k} z^{k+a_1}...-w_N\sum_{k=0}^{+\infty} g_{k} z^{k+a_N} =1$, and also
  \begin{eqnarray}\label{eq-utile}
 %   \sum_{k=0}^{+\infty} g_{k} z^{k}-w_1\sum_{k=0}^{+\infty} g_{k} z^{k+a_1}...-w_N\sum_{k=0}^{+\infty} g_{k} z^{k+a_N} =1, \nonumber \\
    \sum_{k=0}^{+\infty} g_{k} z^{k}-w_1\sum_{i=a_1}^{+\infty} g_{i-a_1} z^{i}...-w_N\sum_{j=a_N}^{+\infty} g_{j-a_N} z^{j} =1. \nonumber 
    %g_0+\sum_{k=1}^{+\infty} \left(g_{k}-w_1g_{k-a_1} ... -w_Ng_{k-a_N} \right) z^{k} =1, 
\end{eqnarray}
Now, setting  $g_{j}=0$, for $-a_N<j<0$, we can also rewrite the left-side of the previous equation as 
% where the last equation is valid if we set {\color{magenta} $g_0=1$ and $g_{j}=0$, for $-a_N<j<0$
 %\begin{equation}
$g_0+\sum_{k=1}^{+\infty} \left(g_{k}-w_1g_{k-a_1} ... -w_Ng_{k-a_N} \right) z^{k} =1$.
%\end{equation}
Finally, note that to hold the equality we need that $g_0=1$ and $g_{k}-w_1g_{k-a_1} ... -w_Ng_{k-a_N}=0$.
   %\begin{gather}\label{eq-utile2}
   %\left\{
   %\begin{split}
    %&g_0=1, \\
   % &g_{k}-w_1g_{k-a_1} ... -w_Ng_{k-a_N}=0.
   % \end{split}
   %\right.
   %\end{gather}   
\end{proof}
%Explicitly, we get
%\begin{eqnarray*}
%  \sum_{k=0}^{+\infty} g_k z^k&=& 1+ w_1\sum_{k=0}^{+\infty} g_k z^{k+a_1}+...+w_N\sum_{k=0}^{+\infty} g_k z^{k+a_N},\\
%  \sum_{k=0}^{+\infty} g_k z^k&=& z^{0}+ w_1\sum_{j=a_1}^{+\infty} g_{j-a_1} z^{j}+...+w_N\sum_{i=a_N}^{+\infty} g_{i-a_N} z^{i}.
%\end{eqnarray*}
%
%Algebraically, the equality holds if the coefficients associated to the powers of $z^{k}$ are equal in both sides, then clearly one has 
%\begin{equation}\label{EqRecurrence}
% \mathcal{U}_S: \,\,\, g_k=w_1g_{k-a_1} + ... +w_Ng_{k-a_N}, \forall \mbox{  } k\geq 0
%\end{equation}
%with $g_0=1$ and $g_k=0$ for $k< 0$.
%

The recurrence in (\ref{EqRecurrence}) is denoted by $\mathcal{U}_S$ and it is \emph{associated} to the semigroup $S =\left\langle a_1,\, a_2,\,\dots,\, a_N\right\rangle$. %with weight $w_i$, $i=1,...,N$.
In the sequel, we link this result with the questions stated in the previous section. We recall that we do not require that $a_1,\, a_2,\,\dots,\, a_N$ is a minimal set of generators of $S$.

\begin{lem}\label{lem-gk-zero}
  The coefficient $g_k$ is zero if and only if $k\not\in S$. 
\end{lem}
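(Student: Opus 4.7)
The plan is to expand $G(z)$ as a geometric series in $F_1(z)$ and observe that, because all weights $w_i$ are strictly positive, the coefficient of $z^k$ is a sum of non-negative terms which is zero precisely when no term appears, i.e.\ precisely when $k$ has no representation as a non-negative integer combination of $a_1,\dots,a_N$.

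In detail, since $F_1(z) = \sum_{i=1}^N w_i z^{a_i}$ has no constant term (each $a_i \geq 1$), the identity
\[
  G(z) \;=\; \frac{1}{1 - F_1(z)} \;=\; \sum_{m=0}^{\infty} F_1(z)^m
\]
holds in the formal power series ring $\mathbb{R}[[z]]$, and for each fixed $k$ only finitely many $m$ contribute to the coefficient of $z^k$. The first step would be to expand $F_1(z)^m$ by the multinomial theorem as
\[
  F_1(z)^m \;=\; \sum_{\substack{(n_1,\dots,n_N)\\ n_1+\cdots+n_N = m}} \binom{m}{n_1,\dots,n_N}\, w_1^{n_1}\cdots w_N^{n_N}\; z^{n_1 a_1 + \cdots + n_N a_N}.
\]
The second step is then to collect powers of $z$: the coefficient $g_k$ equals
\[
  g_k \;=\; \sum_{\substack{(n_1,\dots,n_N)\in(\mathbb{N}\cup\{0\})^N\\ n_1 a_1 + \cdots + n_N a_N = k}} \binom{n_1+\cdots+n_N}{n_1,\dots,n_N}\, w_1^{n_1}\cdots w_N^{n_N}.
\]

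The third step is to read off the equivalence. By definition of $S = \langle a_1,\dots,a_N\rangle$, the index set of this sum is non-empty exactly when $k \in S$. Since every summand is strictly positive (the multinomial coefficient is a positive integer and each $w_i>0$), the sum vanishes if and only if it is empty, i.e.\ if and only if $k \notin S$.

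There is no real obstacle here: the only thing to verify carefully is that the formal-power-series manipulation is legitimate, which is immediate from $F_1(0)=0$, and that positivity of the $w_i$ rules out any cancellation among the non-negative summands. The hypothesis $w_i>0$ is essential precisely at this last point, since over a general field one could have $g_k=0$ even with $k\in S$ due to cancellations.
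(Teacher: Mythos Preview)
Your proof is correct and follows essentially the same route as the paper: both expand $G(z)=\sum_{m\ge 0}F_1(z)^m$ as a geometric series, expand the powers of $F_1$, and use strict positivity of the $w_i$ to rule out cancellation so that $g_k>0$ exactly when $k$ has a representation $\sum n_i a_i$. The only cosmetic difference is that the paper writes $F_1(z)^t$ as a sum over ordered $t$-tuples of generator indices, whereas you group these via the multinomial theorem; your version is a bit more explicit about why the formal-power-series identity is valid and about where the hypothesis $w_i>0$ is used.
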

\begin{proof}
%   We now present an algebraic proof for this lemma, but in Section \ref{sec-probabilistic} we give a probabilistic interpretation for this.
%  
  Using that $\nicefrac{1}{1-x}=\sum_{i\geq 0} x^i$, then  $G(z)= \sum_{k=0}^{\infty} g_k z^k=\sum_{t=0}^{\infty} F_t(z)$
  where we have set $F_t(z)=\left[F_1(z)\right]^{t}$ and $F_0(z)=1$. 
  The coefficients of $F_t(z)=\left(\sum_{i=1}^{N}w_i z^{a_i}\right)^t$ are non-zero only on the $z$-powers having for exponent an element of the semigroup given by a sum of $t$ generators of $S$ (non necessarily different), that is $\sum_{q=1}^{t}a_{i_q}$ where $1\leq i_q\leq N$.
  Indeed
  \begin{eqnarray}
    F_t(z)	&=&\left(\sum_{i=1}^{N}w_i z^{a_i}\right)^t=\sum\prod_{q=1}^{t}w_{i_q} z^{a_{i_q}}=\sum \left(\prod_{q=1}^{t}w_{i_q}\right) z^{\left(\sum_{q=1}^{t}a_{i_q}\right)}.
    \label{eq-Ft-expression}
  \end{eqnarray}
  For this reason, in the sum $\sum_{t=0}^{\infty} F_t(z)$ the exponents of the power $z^i$ with non zero coefficients are exactly all the elements of $S$.
  Therefore, one gets the statement.
\end{proof}

\begin{thm}\label{thm-main-theorem}
  Let $S =\left\langle a_1,\, a_2,\,\dots,\, a_N\right\rangle$ and let $I\subseteq \Delta(a_1, a_2, \dots, a_N)$.
  Then $V_{(\beta; I)}\neq \emptyset$, for all $\beta\in S$, with $\beta\geq a_N$.
%   
%   Then, for every choice of strictly positive real numbers $\{w_i\}_{i=1}^{N}$, the recurrence equation $\mathcal{U}_S$, given in (\ref{EqRecurrence}), belongs to $V_{(a_N;I)}$.
\end{thm}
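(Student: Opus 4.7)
The plan is to exhibit an explicit element of $V_{(\beta; I)}$, obtained by applying Lemma~\ref{lem-utile} to a (possibly enlarged) generating set of $S$. The key observation is that Lemma~\ref{lem-utile} never uses minimality of the generators, so I am free to prepend any element of $S$ to the list without altering either $S$ or its gap set $\Delta(a_1,\dots,a_N)$.

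For the base case $\beta = a_N$, the recurrence $\mathcal{U}_S$ already constructed in Section~\ref{sec-random-walk} is the desired witness. Because $w_N > 0$ by assumption, the shift-$a_N$ coefficient $-w_N$ is nonzero, so $\mathcal{U}_S$ lies in the principal open $\{\alpha_{a_N} \neq 0\}$ and has order exactly $a_N$. The coefficient sequence $\{g_k\}$ is a non-trivial solution, since $g_0 = 1$, and by Lemma~\ref{lem-gk-zero} it vanishes precisely on $\Delta(a_1,\dots,a_N)$, which contains $I$. Hence $\mathcal{U}_S \in V_{(a_N; I)}$.

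For $\beta \in S$ with $\beta > a_N$, I would rewrite $S$ as $S = \langle a_1, \dots, a_N, \beta\rangle$; this is legitimate because $\beta \in S$, and the gcd condition is preserved. Applying Lemma~\ref{lem-utile} to this enlarged ordered list, with any strictly positive weight $w_{N+1}$ attached to the new generator $\beta$, produces a recurrence whose largest shift is $\beta$ with nonzero coefficient $-w_{N+1}$, hence of order exactly $\beta$. Its solution $\{g_k\}$ is again non-trivial and, by Lemma~\ref{lem-gk-zero}, vanishes exactly on $\Delta(a_1,\dots,a_N,\beta) = \Delta(a_1,\dots,a_N) \supseteq I$. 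This exhibits an element of $V_{(\beta; I)}$.

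There is no substantive obstacle: the theorem is essentially a direct corollary of the two preceding lemmas, and the only thing worth flagging is the freedom to enlarge the generating set while keeping the gap set fixed, which converts "order $a_N$" into "order $\beta$" for every $\beta \in S$ with $\beta \ge a_N$.
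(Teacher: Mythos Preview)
Your proof is correct and follows essentially the same route as the paper: exhibit $\mathcal{U}_S$ as a witness in $V_{(a_N;I)}$ using Lemma~\ref{lem-gk-zero}, then for $\beta>a_N$ enlarge the generating set by adjoining $\beta\in S$ and reapply the construction. Your version is slightly more explicit (noting $g_0=1$ for non-triviality and that the gcd condition is preserved), but the argument is identical in substance.
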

\begin{proof}
  For every choice of strictly positive real numbers $\{w_i\}_{i=1}^{N}$, the recurrence equation $\mathcal{U}_S$, given in (\ref{EqRecurrence}), belongs to $V_{(a_N;I)}$.
  Indeed, comparing Eqs. \eqref{dep_u_alpha} and \eqref{EqRecurrence}, we note easily that     $\alpha_j=-w_k$ if $j = a_k$ and zero otherwise, 
%   \[
%     \alpha_j=\begin{cases}
% 		{\color{red}-}w_k	& j = a_k;\\
% 		0	& \text{ otherwise},
% 	      \end{cases}
%   \]
  with $w_{N}\neq 0$, so that $\alpha_{a_N} \neq 0$.  
  Hence $\mathcal{U}_S$ is a recurrence equation of order $a_N$.
  %
 % Some solutions $\{g_k\}_{k\in\mathbb{N}}$ of the recurrence (\ref{EqRecurrence}) are described in Lemma \ref{lem-utile}.
  % We have seen that the recurrence (\ref{EqRecurrence}) has solution %({\color{red}for instance if the %{\it characteristic roots} are distinct da tolglire... o non ho capito di cosa parli. quella Ãš una soluzione sempre!}),
  %given by (\ref{eq:coefficients}).
  Using Lemma \ref{lem-gk-zero}, we know that a solution $\{g_k\}_{k\in\mathbb{N}}$ is zero if and only if $k\not\in S$. 
  This proves the result for $\beta=a_N$.  
  For $\beta>a_N$, we observe that the semigroup $S$ does not change if we add to the generators the element $\beta\in S$. 
  Since we have never required that $\{a_1,\dots, a_N\}$ is a minimal set of generators for $S$, we apply again the previous theorem with $a_1<a_2<\dots <a_N<\beta$.%$\{a_1,\dots, 
\end{proof}

% 
% \begin{cor}\label{cor-non-empty}
%   Let $S =\left\langle a_1,\, a_2,\,\dots,\, a_N\right\rangle$ and let $I\subseteq \Delta(a_1, a_2, \dots, a_N)$. 
%   %
%   Then $V_{(\beta; I)}\neq \emptyset$, for all $\beta\in S$, with $\beta\geq a_N$.
% \end{cor}
% \begin{proof}
%   The result comes immediately from the previous theorem for $\beta=a_N$.  For $\beta>a_N$, we observe that the semigroup $S$ does not change if we add to the generators the element $\beta\in S$. Since we have never required that $\{a_1,\dots, a_N\}$ is a minimal set of generators for $S$, we apply again the previous theorem with $a_1<a_2<\dots <a_N<\beta$.%$\{a_1,\dots, a_N\}\cup \{\beta\}$.
% \end{proof}

We have seen that for any  finitely generated numerical semigroup $S$ such that $\operatorname{gcd}(a_1, a_2, \dots, a_N)=1$, the Frobenius number, $g(S)$, exists and every integer $k$ greater than $g(S)$ belongs to $S$.
Then, we could narrow down the previous result:

\begin{cor}\label{cor-non-empty-definitely}
  Let $S =\left\langle a_1,\, a_2,\,\dots,\, a_N\right\rangle$ and let $I\subseteq \Delta(a_1, a_2, \dots, a_N)$.
  Then there exists a constant value $K\in \mathbb{N}$ such that for all $\beta>K$, we have $V_{(\beta; I)}\neq \emptyset$.
\end{cor}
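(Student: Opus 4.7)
The plan is to combine Theorem \ref{thm-main-theorem} with the existence of the Frobenius number, which is guaranteed by the standing assumption $\gcd(a_1,\dots,a_N)=1$. Theorem \ref{thm-main-theorem} asserts $V_{(\beta;I)}\neq \emptyset$ under two conditions on $\beta$: namely $\beta\in S$ and $\beta\ge a_N$. The corollary simply says that both conditions are eventually automatic, so the statement will follow by picking a threshold $K$ large enough to enforce them simultaneously.

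First, since $\gcd(a_1,\dots,a_N)=1$, the set of gaps $\Delta(a_1,\dots,a_N)$ is finite, and every integer strictly greater than the Frobenius number $g(S)$ belongs to $S$. This handles the membership condition $\beta\in S$. To also guarantee the order condition $\beta\ge a_N$, I would take
\[
K \;=\; \max\bigl(g(S),\, a_N-1\bigr).
\]
Then for any $\beta>K$, one has both $\beta>g(S)$, hence $\beta\in S$, and $\beta\ge a_N$. Applying Theorem \ref{thm-main-theorem} to this $\beta$ and the given $I\subseteq \Delta(a_1,\dots,a_N)$ yields $V_{(\beta;I)}\neq \emptyset$, which is precisely the conclusion.

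There is no real obstacle: the corollary is an essentially immediate consequence of the theorem once one invokes the finiteness of the gap set. The only mild subtlety worth flagging is that one cannot simply set $K=g(S)$, because the Frobenius number can be smaller than $a_N$ (for instance $S=\langle 2,3\rangle$ has $g(S)=1<a_N=3$); hence the $\max$ with $a_N-1$ is necessary to keep the recurrence of order exactly $\beta$ in the sense required by the definition of $V_{(\beta;I)}$.
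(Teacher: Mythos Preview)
Your argument is correct and is essentially the same as the paper's, which presents the corollary as an immediate consequence of Theorem~\ref{thm-main-theorem} together with the fact that every integer beyond the Frobenius number $g(S)$ lies in $S$. The paper does not spell out an explicit choice of $K$; your refinement $K=\max\bigl(g(S),\,a_N-1\bigr)$, ensuring both $\beta\in S$ and $\beta\ge a_N$, is a welcome bit of extra precision but not a different approach.
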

% \begin{proof}
%   For any  finitely-generated numerical semigroup $S$ there exists $K\in \mathbb{N}$ such that for all $\beta>K$ then $\beta\in S$. Indeed, for instance, we can set $K$ as the Frobenius number plus one, i.e., $K=h(a_1,...,a_N)+1$. Then, owing to Corollary \ref{cor-non-empty}, the statement is proved.
  %This is a consequence of Corollary \ref{cor-non-empty} and of the fact that for any finite numerical semigroup there exists $K\in \mathbb{N}$ such that for all $a>K$ then $a\in S$. Indeed one can choose $K$ to be the Frobenius number plus one.
% {\color{magenta}Ma scusami che differenza esattamente c'é con il corollario di sopra? perché se scegli $K=h(a_1,...,a_N)+1$ in questo modo anche lui appartiene al semigruppo...}
% % \end{proof}

We can also easily provide certain informations about the dimension of the variety  {\color{magenta}$V_{(a_N,I)}$}.
We remark that in algebraic geometry one often uses the so-called {\em Krull dimension} instead of the topological dimension. A suitable definition is given in \cite{AM-comm}, for instance. However, in this article, the reader can suppose that the Krull dimension is the topological one, because we work on the complex field.

%Actually, we can affirm more.
\begin{cor}
  Let $S =\left\langle a_1,\, a_2,\,\dots,\, a_N\right\rangle$ and let $I\subseteq \Delta(a_1, a_2, \dots, a_N)$.
  Then the Krull dimension of $V_{(a_N;I)}$, 
  %$\operatorname{Krull-dim}_{\mathbb{C}}(V_{(a_N;I)})$, 
  is at least $N$, i.e., $\operatorname{dim}_{\mathbb{C}}(V_{(a_N;I)})\geq N$.
\end{cor}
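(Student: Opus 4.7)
The plan is to produce an explicit $N$-dimensional family of points lying in $V_{(a_N;I)}$, directly built from the weight parameters already used in Theorem \ref{thm-main-theorem}. Concretely, consider the linear map $\phi:\mathbb{A}^N_{\mathbb{C}}\to \mathbb{A}^{a_N}_{\mathbb{C}}$ sending $(w_1,\dots,w_N)$ to the tuple $\alpha$ with $\alpha_{a_k}=-w_k$ for $k=1,\dots,N$ and $\alpha_j=0$ for every $j\in\{1,\dots,a_N\}\setminus\{a_1,\dots,a_N\}$. Because the exponents $a_1<\dots<a_N$ are distinct, $\phi$ is injective, so its image $L:=\phi(\mathbb{A}^N_{\mathbb{C}})$ is a linear subspace of $\mathbb{A}^{a_N}_{\mathbb{C}}$ of Krull dimension exactly $N$.

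The first step is to note that Theorem \ref{thm-main-theorem} (together with Lemma \ref{lem-utile} and Lemma \ref{lem-gk-zero}) says precisely that $\phi(w)\in V_{(a_N;I)}$ for every $w\in\mathbb{R}_{>0}^N$: for such a $w$ one has $w_N\neq 0$, so the associated recurrence has order exactly $a_N$, and its canonical solution $\{g_k\}$ vanishes on $\Delta(a_1,\dots,a_N)\supseteq I$. The second step is to upgrade this real-positive family to a Zariski-dense subset of $L$: since $\mathbb{R}_{>0}^N$ is Zariski-dense in $\mathbb{A}^N_{\mathbb{C}}$ and $\phi$ is a closed embedding, $\phi(\mathbb{R}_{>0}^N)$ is Zariski-dense in $L$.

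The third step uses that $V_{(a_N;I)}$ is, by the result of Fr\"oberg-Shapiro cited from \cite{Ralf-Boris}, a Zariski-closed subvariety of the principal open set $U=\{\alpha_{a_N}\neq 0\}\subset \mathbb{A}^{a_N}_{\mathbb{C}}$. Hence $V_{(a_N;I)}\cap L$ is closed in $L\cap U$ and contains the dense subset $\phi(\mathbb{R}_{>0}^N)\subset L\cap U$, so $L\cap U\subseteq V_{(a_N;I)}$. Since $L\cap U$ is obtained from the $N$-dimensional affine space $L$ by removing the single hyperplane $\{w_N=0\}$, it is again of Krull dimension $N$, and therefore $\dim_{\mathbb{C}} V_{(a_N;I)}\geq N$.

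The only delicate point is the bookkeeping around the open condition $\alpha_{a_N}\neq 0$: the recurrence coming from $\phi(w)$ has order exactly $a_N$ only when $w_N\neq 0$, so one must intersect $L$ with $U$ before invoking Zariski-closedness. Apart from that, the argument is essentially formal once Theorem \ref{thm-main-theorem} is in hand, and no new computation with the gap set $I$ is needed.
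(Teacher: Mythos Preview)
Your proof is correct and follows essentially the same approach as the paper: embed the positive-weight family $W\cong\mathbb{R}_{>0}^N$ into $V_{(a_N;I)}$ via Theorem~\ref{thm-main-theorem} and then pass to its Zariski closure to produce an $N$-dimensional subvariety. You are more explicit than the paper (naming the linear embedding $\phi$, identifying $L\cap U$ concretely, and carefully tracking the open condition $\alpha_{a_N}\neq 0$), whereas the paper simply invokes the general fact that any complex variety containing a copy of $\mathbb{R}_{>0}^N$ must have Krull dimension at least $N$.
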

\begin{proof}
  We remark that $V_{(a_N;I)}$ is an open {\em complex} algebraic variety (since defined by polynomial equation and by $\alpha_{a_N} \neq 0$, see \cite{Ralf-Boris}). 
  Let $W$ be the subset of $V_{(a_N;I)}$ defined by the recurrences (\ref{EqRecurrence}) for every choice of strictly positive real number $\{w_1,\dots, w_N\}$.
  In Theorem \ref{thm-main-theorem}, we have proved that $V_{(a_N;I)}\neq \emptyset$ by showing that $W\neq \emptyset$.
  
  We observe that $W$ is isomorphic to $\mathbb{R}_{>0}^N$. 
  Each complex algebraic variety that contains the non-algebraic subset $\mathbb{R}_{>0}^N$ has a complex sub-variety $C$ containing $W$ and of Krull dimension at least $N$, $W\subset C\subseteq V_{(a_N;I)}$.
  Thus $\operatorname{dim}_{\mathbb{C}}(V_{(a_N;I)})\geq N$.
\end{proof}

%%%%%%%%%%%%%%%%%%%%%%%%%%%%%%%%%%%%%%
\subsection{Probabilistic interpretation}\label{sec-probabilistic}
%%%%%%%%%%%%%%%%%%%%%%%%%%%%%%%%%%%%%%
If the coefficients $w_i\geq 0$, are chosen such that $\sum_{i=1}^N w_i=1$,
they define a probability mass, and the functions  $F_t(z)=\left[F_1(z)\right]^{t}$ with $F_1(z)=\sum_{i=1}^{N}w_i z^{a_i}$, defined in the proof of Lemma \ref{lem-gk-zero}, and $G(z)$ have a probabilistic interpretation. %Indeed, let us consider a Markov chain \cite{DeGroot02} $\{X_t\}_{t=0}^{+\infty}$, with  $X_t\in \mathbb{N}$,  based on probability mass $w_i$, $i=1,...,N$. 
Let $X_t$ be a discrete random variable taking values in $\mathbb{N}$, and $t\in \mathbb{N}$. 
We can define, for instance, a random walk associated to the semigroup $S=\left\langle a_1,\, a_2,\,\dots,\, a_N\right\rangle$ as  $ X_t=X_{t-1}+a_i$, with probability $w_i$,  $i=1,...,N$, starting with $X_0=0$. 
The function $F_t(z)$ represents the {\it probability generating function} (PGF) \cite{DeGroot02} associated to the probability of visiting the state $k$ exactly at the time instant $t$, 
$f_{t,k}=\mbox{Prob}\{X_t=k\}$, i.e., $F_t(z)=\sum_{k=0}^{t\cdot a_N}f_{t,k}z^k$.
Let us also consider now the probability of  visiting the state $k$, i.e.,
\[
  g_k=\operatorname{Prob}\{X_t=k \mbox{  } \mbox{ for some } \mbox{  } t\in \mathbb{N}\}, \quad k\in \mathbb{N}.
\]
Basic statistical considerations \cite{DeGroot02} lead us to write the PGF $G(z)$ corresponding to these probability as  $G(z)=\sum_{t=1}^{\infty} F_t(z)$ and so we obtain
$G(z)=\frac{1}{1-w_1z^{a_1}-...-w_Nz^{a_N}}$.
The probabilities $\{g_k\}_{k\in \mathbb{N}}$ of visiting a state $k$, satisfy the linear recurrence equation $\mathcal{U}_S$. 
%\[
 % \mathcal{U}_S: g_k=w_1g_{k-a_1} + ... +w_Ng_{k-a_N}, \forall \mbox{  } k\geq 0,
%\]
%given {\color{magenta}  $g_{j}=0$, for $-a_N<j<0$} \cite{DeGroot02}.
%
They are zero if and only if these states $k$ coincides exactly with the gaps of the numerical semigroup $S$, i.e., $k\in\Delta(a_1, a_2, \dots, a_N)$.
%
%The proof of this fact follows exactly like the proof of Lemma \ref{lem-gk-zero}, but with a probabilistic motivation. 

% 
%  For instance, assuming $X_0=1$, $F_0(z)=1$ and
%   $$F_1(z)=\sum_{k=0}^{a_N}f_{1,k} z^{k}=\sum_{i=1}^{N}w_i z^{a_i}=w_1z^{a_1}+w_2z^{a_2}+...+w_Nz^{a_N}.$$
%   Note also that $F_t(z)=\left[F_1(z)\right]^{t}$. The function $G(z)$ the PGF for the probabilities $g_k=\mbox{Prob}\{X_t=k \mbox{  } \mbox{ for some } \mbox{  } t\in \mathbb{N}\}$, namely,
%  $$
%  G(z)= \sum_{k=0}^{\infty} g_k z^k. 
%  $$
%  Basic statistical considerations  lead us to write $g_k=\sum_{t=0}^{+\infty} f_{t,k}$, then $G(z)=\sum_{t=1}^{\infty} F_t(z)$ and, since $F_t(z)=[F_1(z)]^t$, we have
%  \begin{eqnarray}
%  \nonumber
% G(z)&=& \sum_{t=0}^{\infty} \left[F_1(z)\right]^t\\
%  \nonumber&=&\frac{1}{1-F_1(z)}\\
%  \nonumber&=&\frac{1}{1-w_1z^{a_1}-...-w_Nz^{a_N}}. 
% \end{eqnarray}
% 
%  

\section*{Acknowledgements}
We would like to thank Ralf Fr\"oberg (Stockholm University, Department of Mathematics) and Boris Shapiro (Stockholm University, Department of Mathematics) for their help and several suggestions.
Furthermore, we thank the reviewers for their useful comments which have helped us to improve this manuscript. Moreover, this work has been partially supported by Government of Spain (project COMONSENS, id.CSD2008-00010, project DEIPRO, and project COMPREHENSION, id. TEC2012-38883-C02-01).

%-----------------------------
%\newpage
%\clearpage
\addcontentsline{toc}{section}{Bibliografy}
% \bibliographystyle{plain}
% \bibliography{biblio}

\end{document}